\newtheorem{theorem}{Theorem}
\newtheorem{lemma}[theorem]{Lemma}
\newtheorem{proposition}[theorem]{Proposition}
\newtheorem{corollary}[theorem]{Corollary}
\theoremstyle{definition}
\newtheorem{example}[theorem]{Example}
\newtheorem{remark}[theorem]{Remark}
\begin{document}

\title[Symmetric polynomials in the free metabelian Lie algebras]
{Symmetric polynomials\\ in the free metabelian Lie algebras}

\author[Vesselin Drensky, \c{S}ehmus F\i nd\i k, Nazar \c{S}ah{\.i}n \"O\u{g}\"u\c{s}l\"u]
{Vesselin Drensky, \c{S}ehmus F\i nd\i k, Nazar \c{S}ah{\.i}n \"O\u{g}\"u\c{s}l\"u}
\address{Institute of Mathematics and Informatics,
Bulgarian Academy of Sciences,
1113 Sofia, Bulgaria}
\email{drensky@math.bas.bg}
\address{Department of Mathematics,
\c{C}ukurova University, 01330 Balcal\i,
 Adana, Turkey}
\email{sfindik@cu.edu.tr}
\email{noguslu@cu.edu.tr}

\thanks
{The research of the first named author was partially supported by Grant "Groups and Rings - Theory and Applications"
of the Bulgarian National Science Fund.}

\subjclass[2010]
{17B01, 17B30.}
\keywords{Free metabelian Lie algebras; symmetric polynomials.}

\begin{abstract}
Let $K[X_n]$ be the commutative polynomial algebra in the variables $X_n=\{x_1,\ldots,x_n\}$ over a field $K$ of characteristic zero.
A theorem from undergraduate course of algebra states that the algebra $K[X_n]^{S_n}$ of symmetric polynomials
is generated by the elementary symmetric polynomials which are algebraically independent over $K$.
In the present paper we study a noncommutative and nonassociative analogue of the algebra $K[X_n]^{S_n}$ replacing $K[X_n]$
with the free metabelian Lie algebra $F_n$ of rank $n\geq 2$ over $K$.
It is known that the algebra $F_n^{S_n}$ is not finitely generated
but its ideal $(F_n')^{S_n}$ consisting of the elements of $F_n^{S_n}$ in the commutator ideal $F_n'$ of $F_n$
is a finitely generated $K[X_n]^{S_n}$-module.
In our main result we describe the generators of the $K[X_n]^{S_n}$-module $(F_n')^{S_n}$
which gives the complete description of the algebra $F_n^{S_n}$.
\end{abstract}

\maketitle

\section*{Introduction}
Let $K$ be a field of characteristic zero and let $K[X_n] = K[x_1\ldots,x_n]$ be the commutative associative polynomial algebra generated
by $n$ variables on $K$. Let $KV_n$ be the $n$-dimensional vector space with basis $V_n=\{v_1,\ldots,v_n\}$.
In classical invariant theory one considers the algebra $K[X_n]$ as the algebra of polynomial functions on $KV_n$.
If $f(X_n)=f(x_1,\ldots,x_n)\in K[X_n]$ and $v=\xi_1v_1+\cdots+\xi_nv_n\in KV_n$, $\xi_1,\ldots,\xi_n\in K$,
then $f:v\to f(\xi_1,\ldots,\xi_n)$. The general linear group $GL(KV_n)=GL_n(K)$ acts canonically on the vector space $KV_n$
and this action induces an action on the polynomial functions on $KV_n$:
If $f(X_n)\in K[X_n]$ and $g\in GL_n(K)$, then
\[
g(f):v\to f(g^{-1}(v)),\quad v\in KV_n.
\]
If $G$ is a subgroup of $GL_n(K)$ then the algebra
\[
K[X_n]^G=\{f(X_n)\in K[X_n]\mid g(f)=f\text{ for all }g\in G\}
\]
is the algebra of $G$-invariants.

A special case and the main motivation of Hilbert's fourteenth problem \cite{H} is whether the algebra of $G$-invariants $K[X_n]^G$
is finitely generated for any subgroup $G$ of $GL_n(K)$.
This problem was solved into affirmative for finite groups by Emmy Noether \cite{N}.
In particular, for the symmetric group $S_n$ of degree $n$ with its action by permuting of the variables $X_n$
the description of the algebra $K[X_n]^{S_n}$ of symmetric polynomials over a field $K$ of arbitrary characteristic is given in the following
theorem from undergraduate course of algebra known as Fundamental Theorem on Symmetric Polynomials.

\begin{theorem}\label{symmetric polynomials}
The algebra $K[X_n]^{S_n}$
is generated by the elementary symmetric polynomials
\[
e_j=\sum_{i_1<\cdots<i_j}x_{i_1}\cdots x_{i_j},\quad j=1,\ldots,n.
\]
which are algebraically independent over $K$.
\end{theorem}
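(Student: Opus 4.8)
The plan is to establish the two assertions separately, both via the lexicographic monomial order on $K[X_n]$: order the monomials $x_1^{a_1}\cdots x_n^{a_n}$ by comparing the exponent vectors $(a_1,\ldots,a_n)$ lexicographically from the left. This is a monomial order, so every nonzero $f\in K[X_n]$ has a well-defined leading monomial $\mathrm{lm}(f)$, and $\mathrm{lm}(fg)=\mathrm{lm}(f)\,\mathrm{lm}(g)$ since $K[X_n]$ is a domain. In particular $\mathrm{lm}(e_j)=x_1x_2\cdots x_j$.

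For the generation statement I would argue by induction on $\mathrm{lm}(f)$ for $f\in K[X_n]^{S_n}$, $f\neq 0$ (after splitting $f$ into homogeneous components, each of which is again symmetric). First observe that if $\mathrm{lm}(f)=x_1^{a_1}\cdots x_n^{a_n}$ then $a_1\geq a_2\geq\cdots\geq a_n$: if $a_i<a_{i+1}$ for some $i$, then applying the transposition $(i,\,i+1)$ to this monomial produces another monomial of $f$ that is lexicographically larger, contradicting maximality. Now the product $e_1^{a_1-a_2}e_2^{a_2-a_3}\cdots e_{n-1}^{a_{n-1}-a_n}e_n^{a_n}$ has leading monomial exactly $x_1^{a_1}\cdots x_n^{a_n}$ and the same total degree as $\mathrm{lm}(f)$. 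Subtracting from $f$ the appropriate scalar multiple of this product yields a symmetric polynomial that is either $0$ or has strictly smaller leading monomial and no larger degree. Since within any fixed total degree there are only finitely many monomials, the process terminates, and the collected subtracted terms express $f$ as a polynomial in $e_1,\ldots,e_n$. Nothing about the characteristic of $K$ is used.

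For algebraic independence, suppose for contradiction that $P\neq 0$ is a polynomial in indeterminates $t_1,\ldots,t_n$ with $P(e_1,\ldots,e_n)=0$ in $K[X_n]$. For a monomial $t_1^{b_1}\cdots t_n^{b_n}$ occurring in $P$, the multiplicativity of $\mathrm{lm}$ gives $\mathrm{lm}(e_1^{b_1}\cdots e_n^{b_n})=x_1^{b_1+b_2+\cdots+b_n}x_2^{b_2+\cdots+b_n}\cdots x_n^{b_n}$. The map $(b_1,\ldots,b_n)\mapsto(b_1+\cdots+b_n,\,b_2+\cdots+b_n,\,\ldots,\,b_n)$ is unitriangular, hence invertible over $\mathbb{Z}$ and in particular injective, so distinct monomials of $P$ give distinct leading monomials in $K[X_n]$. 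The lexicographically largest of these cannot be cancelled upon performing the substitution $t_j\mapsto e_j$, forcing $P(e_1,\ldots,e_n)\neq 0$, a contradiction.

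The routine part of the work is the verification that lexicographic order is a monomial order and the bookkeeping in the descending induction. The one genuinely substantive point on the generation side is the observation that the leading exponent vector of a symmetric polynomial is weakly decreasing, which is precisely where symmetry enters; on the independence side the corresponding essential point is the unitriangularity (hence injectivity) of the exponent substitution, which rules out accidental cancellation. I anticipate no serious obstacle, as the whole argument is classical and entirely elementary.
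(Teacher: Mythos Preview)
Your argument is correct and is precisely the classical proof via lexicographic order; both the generation step (weakly decreasing leading exponent vector, subtract the matching product of $e_j$'s, descend) and the independence step (unitriangular exponent map, no cancellation of the top term) are standard and sound.

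As for comparison with the paper: there is nothing to compare. The paper does not prove this theorem at all; it is quoted in the introduction as the Fundamental Theorem on Symmetric Polynomials, ``a theorem from undergraduate course of algebra,'' and is used only as background. So your proof is not an alternative to the paper's approach but simply a supplied proof of a result the authors take for granted.
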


One of the branches of noncommutative invariant theory studies the invariants of $G< GL_n(K)$ when $GL_n(K)$ acts
on the free associative algebra $K\langle X_n\rangle=K\langle x_1,\ldots,x_n\rangle$, the free Lie algebra $L_n=L(X_n)$,
and on their homomorphic images modulo ideals invariant under the action of $GL_n(K)$. Replacing the group $GL_n(K)$
with its opposite, in this case it is more convenient to assume that $GL_n(K)$ acts canonically on the vector space $KX_n$ with basis $X_n$
instead on the vector space $KV_n$.

The algebra $K\langle X_n\rangle^{S_n}$ was considered first by Wolf \cite{Wo} in 1936, see \cite{BRRZ} and \cite{GKL}
for a survey and further development.
The result of Dicks and Formanek \cite{DiFo} and Kharchenko \cite{Kh} gives
that the algebra of invariants $K\langle X_n\rangle^G$ is finitely generated if and only if $G$ is a
cyclic group acting by scalar multiplication. On the other hand Koryukin \cite{Ko} studied $K\langle X_n\rangle^G$
with the additional action of the symmetric group $S_d$ permuting the positions of the variables in the homogeneous component
of degree $d$ of $K\langle X_n\rangle$:
\[
\left(\sum\alpha_ix_{i_1}\cdots x_{i_d}\right){\sigma}=\sum\alpha_ix_{i_{\sigma(1)}}\cdots x_{i_{\sigma(d)}},\quad \sigma\in S_d.
\]
He showed that under this additional action the algebra $K\langle X_n\rangle^G$ is finitely generated for reductive groups $G$.

Another popular topic in noncommutative invariant theory is the study of invariants of groups acting on relatively free algebras
in varieties of associative algebras. See, e.g., the recent paper \cite{DoDr} and the references there for the common features
and the differences with classical invariant theory.

When a nontrivial finite group $G$ acts on the free Lie algebra $L_n$ by a result of Bryant \cite{Br}
the algebra of invariants $L_n^G$ is never finitely generated.
A crucial role is played by the free metabelian Lie algebra $F_n=F_n({\mathfrak A}^2)=L_n/L_n''$, $L_n''=[[L_n,L_n],[L_n,L_n]]$,
which is the relatively free algebra in the variety ${\mathfrak A}^2$ of metabelian (solvable of class two) Lie algebras.
By the theorem of Zelmanov \cite{Z} if a Lie algebra satisfies the Engel identity then it is nilpotent.
As a consequence there is a dichotomy for varieties of Lie algebras.
The variety is either nilpotent or all of its polynomial identities follow from the metabelian identity $[[y_1,y_2],[y_3,y_4]]=0$.
As in the case of free Lie algebras, by a results of Drensky \cite{Dr} the algebra
$F_n({\mathfrak V})^G$ is never finitely generated for nontrivial finite groups $G$
and varieties $\mathfrak V$ containing the metabelian variety.
Although the algebra $F_n^G=F_n({\mathfrak A}^2)^G$ is not finitely generated it satisfies the property
that its ideal $(F_n')^G$ consisting of the elements of $F_n^G$ in the commutator ideal $F_n'$ of $F_n$
is a finitely generated $K[X_n]^G$-module under an appropriate action of $K[X_n]$ on $F_n'$.

In the present paper we describe the algebra $F_n^{S_n}$
of the symmetric polynomials in the free metabelian Lie algebra $F_n$, $n\geq 2$,
and give a generating set of the $K[X_n]^{S_n}$-module $(F_n')^{S_n}$. The case $n=2$ was handled by
F{\i}nd{\i}k and \"O{\v g}\"u{\c s}l\"u \cite{FO}.
They gave the description of the symmetric polynomials and an infinite generating set for the algebra $F_2^{S_2}$.

\begin{theorem}\label{the case n=2}
Let $K$ be a field of characteristic zero. Then the symmetric polynomials in the free two-generated metabelian Lie algebra $F_2$
are of the form
\[
f(X_2)=\alpha(x_1+x_2)
+\sum_{0\leq a<b}\alpha_{ab}[x_2,x_1](\text{\rm ad}^ax_1\text{\rm ad}^bx_2-\text{\rm ad}^ax_2\text{\rm ad}^bx_1),
\quad \alpha,\alpha_{ab}\in K.
\]
\end{theorem}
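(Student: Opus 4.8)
The plan is to work with the standard monomial basis of $F_2$ and to compute the action of the nontrivial permutation directly. Recall that a $K$-basis of $F_2$ is given by $x_1$, $x_2$ together with the left-normed commutators
\[
[x_2,x_1]\,\text{\rm ad}^ax_1\,\text{\rm ad}^bx_2,\qquad a,b\geq 0,
\]
and that these commutators form a basis of the commutator ideal $F_2'$. A fact I shall use repeatedly is that the operators $\text{\rm ad}\,x_1$ and $\text{\rm ad}\,x_2$ commute when acting on $F_2'$: for $w\in F_2'$ the Jacobi identity gives $[[w,x_1],x_2]-[[w,x_2],x_1]=[w,[x_1,x_2]]$, and the right-hand side lies in $[F_2',F_2']$, which is zero by the metabelian identity. (Equivalently, $F_2'$ is a module over the polynomial algebra $K[x_1,x_2]$ with $x_i$ acting as $\text{\rm ad}\,x_i$, and it is free of rank one on the generator $[x_2,x_1]$, there being no three distinct indices to produce a Jacobi relation.)

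Now write an arbitrary element of $F_2$ in this basis,
\[
f=\alpha_1x_1+\alpha_2x_2+\sum_{a,b\geq 0}\alpha_{ab}[x_2,x_1]\,\text{\rm ad}^ax_1\,\text{\rm ad}^bx_2,
\]
and apply the transposition $\sigma=(1\,2)\in S_2$, which interchanges $x_1$ and $x_2$. Since $\sigma([x_2,x_1])=-[x_2,x_1]$ and $\sigma$ carries $\text{\rm ad}^ax_1\,\text{\rm ad}^bx_2$ to $\text{\rm ad}^ax_2\,\text{\rm ad}^bx_1=\text{\rm ad}^bx_1\,\text{\rm ad}^ax_2$ by the commutativity above, a reindexing $a\leftrightarrow b$ gives
\[
\sigma(f)=\alpha_2x_1+\alpha_1x_2-\sum_{a,b\geq 0}\alpha_{ba}[x_2,x_1]\,\text{\rm ad}^ax_1\,\text{\rm ad}^bx_2.
\]
Comparing coefficients with respect to the (linearly independent) basis in the equation $f=\sigma(f)$ yields $\alpha_1=\alpha_2=:\alpha$ and $\alpha_{ab}=-\alpha_{ba}$ for all $a,b\geq 0$; in particular $\alpha_{aa}=0$. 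Splitting the double sum into the parts $a<b$ and $a>b$, relabelling the second part by $a\leftrightarrow b$, and using once more that $\text{\rm ad}^bx_1\,\text{\rm ad}^ax_2=\text{\rm ad}^ax_2\,\text{\rm ad}^bx_1$, the element $f$ takes the asserted form
\[
\alpha(x_1+x_2)+\sum_{0\leq a<b}\alpha_{ab}[x_2,x_1]\bigl(\text{\rm ad}^ax_1\,\text{\rm ad}^bx_2-\text{\rm ad}^ax_2\,\text{\rm ad}^bx_1\bigr).
\]

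For the converse one simply verifies that every element of the displayed form is fixed by $\sigma$: clearly $\sigma(x_1+x_2)=x_1+x_2$, and $\sigma$ sends $[x_2,x_1]\bigl(\text{\rm ad}^ax_1\,\text{\rm ad}^bx_2-\text{\rm ad}^ax_2\,\text{\rm ad}^bx_1\bigr)$ to $-[x_2,x_1]\bigl(\text{\rm ad}^ax_2\,\text{\rm ad}^bx_1-\text{\rm ad}^ax_1\,\text{\rm ad}^bx_2\bigr)$, i.e. to itself. The argument is entirely computational and presents no genuine difficulty; the only thing that requires care is the bookkeeping — one must invoke commutativity of $\text{\rm ad}\,x_1$ and $\text{\rm ad}\,x_2$ on $F_2'$ at precisely the right moments, both to rewrite $\sigma$ of a basis monomial back in the chosen basis and to recognize the regrouped sum as the stated antisymmetric combination, and one must keep the reindexing $a\leftrightarrow b$ consistent throughout.
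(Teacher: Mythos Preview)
Your proof is correct. The paper does not actually give its own proof of this statement: it is quoted from \cite{FO} (F{\i}nd{\i}k--\"O{\u g}\"u{\c s}l\"u) as background for the general case, so there is no ``paper's proof'' to compare against directly. Your argument is the natural elementary one---exploit that $F_2'$ is a free rank-one $K[x_1,x_2]$-module on $[x_2,x_1]$, compute the action of the transposition on the standard basis, and read off the antisymmetry condition on the coefficients. For general $n$ this kind of direct bookkeeping becomes unwieldy, which is why the paper instead passes to the Shmel'kin embedding into the abelian wreath product and uses the known generators of $K[U_n,X_n]^{S_n}$; but for $n=2$ your hands-on computation is entirely adequate and arguably cleaner.
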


As a consequence, one immediately obtains the following.

\begin{corollary}\label{finite generation for n=2}
The $K[X_2]^{S_2}$-module $(F_2')^{S_2}$ is generated by
\begin{equation}\label{generator for n=2}
f_{12}(X_2)=[x_2,x_1](\text{\rm ad}x_2-\text{\rm ad}x_1).
\end{equation}
\end{corollary}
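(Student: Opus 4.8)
The plan is to derive the corollary directly from Theorem \ref{the case n=2}, using the Fundamental Theorem on Symmetric Polynomials (Theorem \ref{symmetric polynomials}) in two variables. The grading of $F_2$ by degree is preserved by the permutation action of $S_2$; its degree-one component $Kx_1\oplus Kx_2$ has $S_2$-invariants $K(x_1+x_2)$, while $F_2'$ is the span of the components of degree $\geq 2$, so $(F_2)^{S_2}=K(x_1+x_2)\oplus(F_2')^{S_2}$. Comparing with the shape of $f(X_2)$ in Theorem \ref{the case n=2}, one sees that $(F_2')^{S_2}$ is exactly the $K$-linear span of the elements
\[
g_{ab}=[x_2,x_1]\bigl(\text{\rm ad}^ax_1\,\text{\rm ad}^bx_2-\text{\rm ad}^ax_2\,\text{\rm ad}^bx_1\bigr),\qquad 0\leq a<b,
\]
and that $f_{12}=g_{01}=[x_2,x_1](\text{\rm ad}x_2-\text{\rm ad}x_1)$ lies in $(F_2')^{S_2}$ (also immediate from $f_{12}=[[x_2,x_1],x_2]-[[x_2,x_1],x_1]$). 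Since $F_2$ is metabelian, the operators $D_1=\text{\rm ad}x_1$ and $D_2=\text{\rm ad}x_2$ commute on $F_2'$, and under the $K[X_2]$-module structure on $F_2'$ recalled in the introduction the subalgebra $K[X_2]^{S_2}=K[e_1,e_2]$ acts through the commuting operators $e_1\mapsto D_1+D_2$ and $e_2\mapsto D_1D_2$.

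The main step is to write each $g_{ab}$ as $f_{12}$ times an element of $K[X_2]^{S_2}$. Because $D_1$ and $D_2$ commute on $F_2'$, one has the factorization
\[
D_1^aD_2^b-D_1^bD_2^a=(D_1D_2)^a\,(D_2-D_1)\sum_{i=0}^{b-a-1}D_1^iD_2^{\,b-a-1-i}.
\]
Here $\sum_{i=0}^{b-a-1}D_1^iD_2^{\,b-a-1-i}$ is the complete homogeneous symmetric polynomial $h_{b-a-1}$ evaluated at $D_1,D_2$, and $(D_1D_2)^a=e_2^a$. By Theorem \ref{symmetric polynomials} the symmetric polynomial $h_{b-a-1}(x_1,x_2)$ equals a polynomial $q_{ab}(e_1,e_2)$ in the elementary symmetric polynomials, so the operator $h_{b-a-1}(D_1,D_2)$ is $q_{ab}(e_1,e_2)$. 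Hence
\[
g_{ab}=[x_2,x_1](D_2-D_1)\cdot e_2^a\,q_{ab}(e_1,e_2)=f_{12}\cdot\bigl(e_2^a\,q_{ab}(e_1,e_2)\bigr)\in K[X_2]^{S_2}\cdot f_{12}.
\]
Since the $g_{ab}$ span $(F_2')^{S_2}$ over $K$ and $(F_2')^{S_2}$ is a $K[X_2]^{S_2}$-module containing $f_{12}$, we get $(F_2')^{S_2}=K[X_2]^{S_2}\cdot f_{12}$, which is the assertion.

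There is no real obstacle here; the only points needing care are that the splitting of the degree-one part of $F_2$ from $F_2'$ is compatible with the $S_2$-action (so that Theorem \ref{the case n=2} genuinely describes $(F_2')^{S_2}$ via the $g_{ab}$), and that one keeps the two actions apart — the adjoint action of $K[X_2]$ on $F_2'$ and the permutation action of $S_2$ on $F_2$. One may also note that $f_{12}$ is, up to a scalar, the unique element of minimal degree $3$ in $(F_2')^{S_2}$, which is consistent with the module being generated by this single element.
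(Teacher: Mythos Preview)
Your proof is correct and follows exactly the route the paper intends: the paper states the corollary with no proof beyond the phrase ``As a consequence, one immediately obtains the following'' after Theorem~\ref{the case n=2}, and your argument is precisely the natural elaboration of that claim. The factorization
\[
D_1^aD_2^b-D_1^bD_2^a=(D_1D_2)^a(D_2-D_1)\sum_{i=0}^{b-a-1}D_1^iD_2^{\,b-a-1-i}
\]
together with the symmetry of the complete homogeneous polynomial is exactly the computation one has in mind when saying the result is immediate.
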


\section{Preliminaries}\label{first section}

In the sequel, we fix the field $K$ of characteristic zero,
the set $X_n$ consisting of the variables $x_1,\ldots,x_n$, $n\geq 2$,
and denote by $F_n$ the free metabelian Lie algebra generated by the set $X_n$ over the base field $K$.
It is well known that the commutator ideal $F_n'=[F_n,F_n]$ of $F_n$ has a basis consisting of the elements of the form
\[
[\ldots[[x_{i_1},x_{i_2}],x_{i_3}],\ldots,x_{i_d}]=[x_{i_1},x_{i_2},x_{i_3},\ldots,x_{i_d}]
=[x_{i_1},x_{i_2}]\text{ad}x_{i_3}\cdots \text{ad}x_{i_d},
\]
where $i_1 > i_2 \leq i_3\leq\cdots\leq i_d$. The identity
\[
[x_{i_1},x_{i_2}]\text{ad}x_{i_3}\cdots \text{ad}x_{i_d}=[x_{i_1},x_{i_2}]\text{ad}x_{i_{\tau(3)}}\cdots \text{ad}x_{i_{\tau(d)}}
\]
for any permutation $\tau$ of $\{3,\ldots,d\}$ allows to equip $F_n'$ with a $K[X_n]$-module structure by
\[
fp(x_1,\ldots,x_n)=fp(\text{ad}x_1,\ldots,\text{ad}x_n),\quad f\in F_n',p(x_1,\ldots,x_n)\in K[X_n].
\]
For more details we refer to the book by Bahturin \cite{Ba} on the theory of Lie algebras and their identities.

We make use of the embedding of the free metabelian Lie algebra $F_n$ into an abelian wreath product
of Lie algebras due to Shmel'kin \cite{Sh}. Let $KU_n$ and $KV_n$ be the abelian Lie algebras with bases
$U_n=\{u_1,\ldots,u_n\}$ and $V_n=\{v_1,\ldots,v_n\}$, respectively.
Let $W_n$ be the free right $K[X_n]$-module with free generators $u_1,\ldots,u_n$.
We assume that $W_n$ is a Lie algebra with trivial multiplication. The abelian wreath product $(KU_n)\text{wr}(KV_n)$
is equal to the semidirect sum $W_n\leftthreetimes KV_n$. The elements of $W_n\leftthreetimes KV_n$ are of the form
\begin{equation}\label{element of wreath product}
w=\sum_{i=1}^nu_ip_i(X_n)+\sum_{i=1}^n\alpha_iv_i,\quad p_i(X_n)\in K[X_n],\alpha_i\in K.
\end{equation}
The multiplication in $(KU_n)\text{wr}(KV_n)$ is defined by
\[
[W_n,W_n] = [V_n,V_n] = 0,\quad [u_ip_i(X_n),v_j]=u_ip_i(X_n)x_j,\quad i,j=1,\ldots,n.
\]
Thus the the abelian wreath product $(KU_n)\text{wr}(KV_n)$ is a metabelian Lie algebra. Since the Lie algebra $F_n$ is free
in the variety ${\mathfrak A}^2$ of all metabelian Lie algebras,
then every mapping $X_n\to (KU_n)\text{wr}(KV_n)$ can be extended to a homomorphism $F_n\to (KU_n)\text{wr}(KV_n)$. The homomorphism
$\delta:F_n\to (KU_n)\text{wr}(KV_n)$ defined by
\[
\delta: x_i\to u_i + v_i,\quad i = 1,\ldots,n,
\]
is a monomorphism as a special case of the embedding theorem of Shmel'kin \cite{Sh}.
In the sequel we shall identify the elements of $F_n$ and their images under $\delta$ in $(KU_n)\text{wr}(KV_n)$. If
\[
f=\sum_{i>j}[x_i,x_j]p_{ij}(X_n)\in F_n',\quad p_{ij}(X_n)\in K[X_n],
\]
then
\[
\delta(f)=\sum_{i>j}(u_ix_j-u_jx_i)p_{ij}(X_n)\in W_n\subset (KU_n)\text{wr}(KV_n).
\]
On the other hand, the element $w$ from (\ref{element of wreath product})
belongs to the image of the commutator ideal $F_n'$ of $F_n$
if and only if $\alpha_i=0$, $i=1,\ldots,n$, and
\begin{equation}\label{belonging to F'}
\sum_{i=1}^nx_ip_i(X_n)=0.
\end{equation}
The general linear group $GL_n(K)$ acts simultaneously on the vector spaces $KU_n$ and $KV_n$
in the same way as on the vector space $KX_n$ with basis $X_n$. This action is extended canonically on
the abelian wreath product $(KU_n)\text{wr}(KV_n)$. The action of $GL_n(K)$ on $(KU_n)\text{wr}(KV_n)$ is the same as the action on the factor algebra
$K[U_n,X_n]/I$ of the polynomial algebra $K[U_n,X_n]$
modulo the ideal $I$ generated by $U_n^2=\{u_iu_j\mid 1\leq i\leq j\leq n\}$.
Hence, as in \cite[Section 3]{DrF}, $w\in (KU_n)\text{wr}(KV_n)$ from (\ref{element of wreath product}) can be identified with the element
\begin{equation}\label{image in polynomial algebra in 2n variables}
\pi(w)=\sum_{i=1}^nu_if_i(X_n)+\sum_{i=1}^n\alpha_ix_i\in K[U_n,X_n].
\end{equation}
For a subgroup $G$ of $GL_n(K)$ we can consider the $G$-invariants $((KU_n)\text{wr}(KV_n))^G$.
The following easy assertion gives that for a large class of groups $G$ the vector space $W_n^G\subset((KU_n)\text{wr}(KV_n))^G$
is a finitely generated $K[X_n]^G$-module. The leading idea of the proof is to find an object which has nice properties from the point
of view of classical invariant theory and then to transfer these properties to $W_n^G$.
A similar idea was used in the proof of \cite[Proposition 2.2]{DoDr}.

\begin{lemma}\label{finite generation for class of groups}
Let $G$ be a subgroup of $GL_n(K)$ such that the algebra of invariants $K[U_n,X_n]^G$ is finitely generated. Then
$W_n^G$ is a finitely generated $K[X_n]^G$-module.
\end{lemma}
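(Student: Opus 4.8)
The plan is to exploit the identification of $(KU_n)\mathrm{wr}(KV_n)$ with $K[U_n,X_n]/I$ described before the statement, where $I$ is generated by $U_n^2=\{u_iu_j\mid 1\le i\le j\le n\}$. Since $G$ acts on $K[U_n,X_n]$ in a way that preserves $I$, it acts on the quotient, and under the identification $(\ref{image in polynomial algebra in 2n variables})$ the subspace $W_n$ corresponds exactly to the span of the monomials $u_i m(X_n)$ with $m$ a monomial in $X_n$; this is the degree-one-in-$U_n$ homogeneous component of $K[U_n,X_n]/I$. Grading $K[U_n,X_n]$ by $U_n$-degree and noting that $G$ acts homogeneously with respect to this grading, the algebra $R:=K[U_n,X_n]^G$ inherits the grading, and its degree-zero component is $K[X_n]^G$ while its degree-one component $R_1$ maps onto $W_n^G$ (the map $K[U_n,X_n]\to K[U_n,X_n]/I$ is an isomorphism in $U_n$-degrees $0$ and $1$, so $R_1\cong W_n^G$ as $K[X_n]^G$-modules).

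Next I would invoke the hypothesis: $R=K[U_n,X_n]^G$ is finitely generated as a $K$-algebra. Because $G$ respects the $U_n$-grading, we may choose a finite set of homogeneous algebra generators $h_1,\dots,h_m$ of $R$, say with $U_n$-degrees $d_1,\dots,d_m$. Any element of $R_1$ is a $K$-linear combination of products $h_{j_1}\cdots h_{j_k}$ whose degrees sum to $1$; such a product has exactly one factor of degree $1$ and all remaining factors of degree $0$. Hence $R_1$ is spanned, as a module over $R_0=K[X_n]^G$, by the finitely many degree-one generators $h_j$ (those $j$ with $d_j=1$). Transporting this back through the isomorphism $R_1\cong W_n^G$ shows that $W_n^G$ is generated as a $K[X_n]^G$-module by the images of those finitely many $h_j$, which is the desired conclusion.

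The only genuinely delicate point is the bookkeeping around the two structures on $K[U_n,X_n]/I$: one must check that the $G$-action on the wreath product really does coincide with the induced action on $K[U_n,X_n]/I$ (this is asserted in the text, citing \cite{DrF}) and that the module structure of $W_n$ over $K[X_n]$ matches multiplication of polynomials modulo $I$ in $U_n$-degrees $0$ and $1$ — in those degrees $I$ contributes nothing, so no collapsing occurs and the identification is clean. Everything else is the standard observation that a finitely generated graded algebra has each graded piece finitely generated over the degree-zero part, applied here only in degree one. I do not expect any obstacle beyond making these identifications precise.
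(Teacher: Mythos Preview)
Your argument is correct and is essentially the paper's own proof: choose homogeneous generators of $K[U_n,X_n]^G$ with respect to the $U_n$-grading, observe that the degree-zero ones generate $K[X_n]^G$ and the degree-one ones generate $W_n^G$ as a $K[X_n]^G$-module. The detour through the quotient $K[U_n,X_n]/I$ is unnecessary here, since $W_n$ is already literally the $U_n$-degree-one component of $K[U_n,X_n]$ and $I$ lives in $U_n$-degree $\geq 2$; the paper simply works directly with the degree-one part.
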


\begin{proof}
Let the algebra $K[U_n,X_n]^G$ be generated by $h_j(U_n,X_n)$, $j=1,\ldots,m$.
Since $GL_n(K)(KU_n)=KU_n$ and $GL_n(K)(KX_n)=KX_n$, the homogeneous components with respect to $U_n$ and to $X_n$
are stable under the action of $GL_n(K)$. Hence without loss of generality we may assume
that $h_j(U_n,X_n)$ are homogeneous with respect to $U_n$ and to $X_n$.
Then the algebra $K[X_n]^G$ is generated by those $h_j=h_j(X_n)$ which do not depend on $U_n$ and
as a $K[X_n]^G$-module $W_n^G$ is generated by those $h_j(U_n,X_n)$ which are linear with respect to $U_n$.
\end{proof}

\begin{corollary}\label{finite generated module}
If $G<GL_n(K)$ is a finite group then $(F_n')^G$ is a finitely generated $K[X_n]^G$-module.
\end{corollary}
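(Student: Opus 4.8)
The plan is to combine Lemma~\ref{finite generation for class of groups} with the classical finiteness theorem of Noether and with the Hilbert basis theorem. First, since $G$ is finite, Noether's theorem applied to the action of $G$ on the polynomial algebra $K[U_n,X_n]$ (in $2n$ variables) gives that $K[U_n,X_n]^G$ is finitely generated. Hence the hypothesis of Lemma~\ref{finite generation for class of groups} is satisfied and $W_n^G$ is a finitely generated $K[X_n]^G$-module. The same theorem of Noether, together with the Hilbert basis theorem, shows that $K[X_n]^G$ is a Noetherian commutative ring.

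Next I would identify $(F_n')^G$ with a $K[X_n]^G$-submodule of $W_n^G$. The monomorphism $\delta$ is equivariant for the action of $GL_n(K)$, since $\delta(x_i)=u_i+v_i$ and $G$ acts in the same way on $U_n$, $V_n$ and $X_n$; moreover, as recorded in Section~\ref{first section}, $\delta$ restricts to a homomorphism of $K[X_n]$-modules from $F_n'$ onto the $K[X_n]$-submodule of $W_n$ cut out by the relation~(\ref{belonging to F'}). Thus $\delta(F_n')$ is a $G$-stable $K[X_n]$-submodule of $W_n$, and $\delta$ restricts to an isomorphism of $K[X_n]^G$-modules between $(F_n')^G$ and $\delta(F_n')\cap W_n^G$, which is a $K[X_n]^G$-submodule of $W_n^G$.

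Finally, a submodule of a finitely generated module over a Noetherian ring is again finitely generated; applying this to $\delta(F_n')\cap W_n^G\subseteq W_n^G$ over the Noetherian ring $K[X_n]^G$ and transporting the result back along $\delta$ yields that $(F_n')^G$ is a finitely generated $K[X_n]^G$-module. The only points that require attention are the compatibility of the $K[X_n]$-module structure on $F_n'$ (defined via the adjoint action) with the right-multiplication action on $W_n$ under $\delta$ --- immediate from $\delta([x_i,x_j])=u_ix_j-u_jx_i$ and $[u_ix_j-u_jx_i,v_k]=(u_ix_j-u_jx_i)x_k$ --- and the $G$-equivariance of all the identifications used, both of which are routine. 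I do not expect a genuine obstacle: the substance of the argument is entirely contained in Lemma~\ref{finite generation for class of groups} and in Noether's finiteness theorem for finite groups.
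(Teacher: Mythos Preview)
Your proof is correct and follows exactly the same line as the paper's own argument: Noether's theorem feeds into Lemma~\ref{finite generation for class of groups} to make $W_n^G$ a finitely generated module over the Noetherian ring $K[X_n]^G$, and then $(F_n')^G$ is a submodule. You have simply spelled out more carefully the equivariance of $\delta$ and the compatibility of the $K[X_n]$-module structures, points the paper leaves implicit.
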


\begin{proof}
As a consequence of the theorem of Emmy Noether \cite{N} we obtain that $W_n^G$ is a finitely generated $K[X_n]^G$-module.
Since the algebra $K[X_n]^G$ is noetherian and $(F_n')^G$ is a $K[X_n]^G$-submodule of $W_n^G$, we obtain immediately that
$(F_n')^G$ is also finite generated as a $K[X_n]^G$-module.
\end{proof}

The following proposition is a partial case of the well known description of the generators of the algebra of $S_n$-invariants
$K[X_n,\ldots,Y_n]^{S_n}$ where the symmetric group acts simultaneously on several sets of variables $X_n,\ldots,Y_n$.
For the proof see e.g. \cite[Chapter 2]{We}.

\begin{proposition}\label{Sym acting on several sets}
The algebra $K[U_n,X_n]^{S_n}$ is generated by the following polynomials which are polarizations of the elementary symmetric polynomials
\begin{equation}\label{polarizations}
e_{p,q}(U_n,X_n)=\sum u_{i_1}\cdots u_{i_p}x_{j_1}\cdots x_{j_q}, \quad 0<p+q\leq n,
\end{equation}
where the summation runs on all tuples $(i_1,\ldots,i_p,j_1,\ldots,j_q)$ consisting of pairwise different numbers
and such that $1\leq i_1<\cdots <i_p\leq n$, $1\leq j_1<\cdots<j_q\leq n$.
\end{proposition}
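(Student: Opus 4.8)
The plan is to deduce the statement from the Fundamental Theorem on Symmetric Polynomials (Theorem~\ref{symmetric polynomials}) by the classical interplay of power sums and polarization, which in characteristic zero is the substance of the argument in \cite[Chapter~2]{We}. Write $R$ for the subalgebra of $K[U_n,X_n]$ generated by the polynomials $e_{p,q}(U_n,X_n)$ with $0<p+q\leq n$. Since $S_n$ acts on $KU_n$ and on $KX_n$ separately, the bigrading of $K[U_n,X_n]$ by $(\deg_{U_n},\deg_{X_n})$ is $S_n$-stable; both $R$ and $K[U_n,X_n]^{S_n}$ are bihomogeneous, so it suffices to show that every bihomogeneous $S_n$-invariant of bidegree $(p,q)$ lies in $R$. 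The inclusion $R\subseteq K[U_n,X_n]^{S_n}$ is clear, each $e_{p,q}$ being an orbit sum over the diagonal $S_n$-action on the pairs $(u_i,x_i)$.

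First I would show that the \emph{polarized power sums}
\[
p_{r,s}=p_{r,s}(U_n,X_n)=\sum_{i=1}^{n}u_i^{\,r}x_i^{\,s},\qquad r+s\geq 1,
\]
generate $K[U_n,X_n]^{S_n}$ as a $K$-algebra. The ring of invariants is spanned by the monomial symmetric polynomials $m_\lambda$, where $\lambda$ runs over the multisets of $n$ pairs of nonnegative integers; I would induct on the total degree $|\lambda|$ and, for fixed degree, on the number $\nu(\lambda)$ of nonzero pairs of $\lambda$. If $\nu(\lambda)\leq 1$ then $m_\lambda$ is $1$ or some $p_{r,s}$. If $\nu(\lambda)\geq 2$, choose a nonzero pair $(r,s)$ occurring in $\lambda$ and let $\lambda^{\circ}$ be $\lambda$ with one such pair deleted; then $p_{r,s}\,m_{\lambda^{\circ}}$ equals a positive integer multiple of $m_\lambda$ plus a $K$-linear combination of monomial symmetric polynomials $m_\mu$ with $|\mu|=|\lambda|$ and $\nu(\mu)<\nu(\lambda)$, so the induction hypothesis and division by the (nonzero, characteristic-zero) integer coefficient give $m_\lambda\in K[\,p_{r,s}\,]$.

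Next I would bring in the $S_n$-equivariant substitution homomorphism $\varphi_t\colon K[X_n]\to K[U_n,X_n][t]$, $x_i\mapsto x_i+tu_i$, for a new central variable $t$. A direct expansion gives $\varphi_t(e_j)=\sum_{p=0}^{j}t^{\,p}e_{p,\,j-p}$ and $\varphi_t(p_m)=\sum_{r=0}^{m}\binom{m}{r}t^{\,r}p_{r,\,m-r}$, so applying $\varphi_t$ to Newton's identities in the $n$ variables $x_1,\dots,x_n$ and comparing coefficients of powers of $t$ yields a system of ``polarized Newton identities'' relating the $p_{r,s}$, the $e_{p,q}$ with $p+q\leq n$, and power sums of strictly smaller total degree. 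Together with the ordinary Newton identities in the $u_i$ alone and in the $x_i$ alone (which put $p_{r,0}$ and $p_{0,s}$ into $K[\,e_{1,0},\dots,e_{n,0}\,]$ and $K[\,e_{0,1},\dots,e_{0,n}\,]$), an induction on the total degree $r+s$ then shows $p_{r,s}\in R$ for every $r,s$. Combined with the previous paragraph this gives $K[U_n,X_n]^{S_n}=R$.

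The one genuinely load-bearing point is the degree bound $p+q\leq n$, i.e.\ that only finitely many of the $e_{p,q}$ are needed: this is exactly what the last induction extracts from the fact that Newton's identities in $n$ variables express $p_m$ for $m>n$ through $e_1,\dots,e_n$ and lower power sums, the bound $n$ propagating through $\varphi_t$ to the polarized versions. Everything else — the bigraded reduction, the power-sum generation, and the coefficient comparisons — is routine bookkeeping once the inductions are set up in the right order (total degree outermost, then, in the generation step, the number of nonzero pairs). An alternative organization would replace the two inductions by the full polarization and restitution formalism of \cite[Chapter~2]{We}: polarize a bihomogeneous invariant of bidegree $(p,q)$ into a multilinear invariant in $p+q$ sets of $n$ variables, express it through polarizations of $e_1,\dots,e_n$, and restitute; the combinatorial core (the degree bound) is the same either way.
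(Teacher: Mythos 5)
Your argument is correct, but note that the paper contains no internal proof of Proposition~\ref{Sym acting on several sets}: it is stated as a special case of the classical description of vector invariants of the symmetric group, with the proof outsourced to \cite[Chapter 2]{We}. What you wrote is a genuine self-contained proof, and it holds up: the reduction to bihomogeneous invariants is harmless; the generation of $K[U_n,X_n]^{S_n}$ by the polarized power sums $p_{r,s}$ via the double induction (total degree outermost, then the number of nonzero exponent pairs) is the standard multisymmetric argument, and the leading coefficient in $p_{r,s}m_{\lambda^\circ}=c\,m_\lambda+\cdots$ is a positive integer, invertible in characteristic zero; and the substitution $x_i\mapsto x_i+tu_i$ applied to Newton's identities in $n$ variables does give, upon comparing coefficients of $t^r$, an expression of $\binom{r+s}{r}p_{r,s}$ through the $e_{p,q}$ with $p+q\leq n$ and polarized power sums of smaller total degree, so the induction on total degree closes and the crucial bound $p+q\leq n$ comes out exactly where you say it does. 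Compared with the treatment the paper cites, which runs through Weyl's polarization--restitution (Aronhold/Capelli) formalism for vector invariants, your organization around power sums is more elementary and shorter, uses characteristic zero only through the invertibility of the integers $c$ and $\binom{r+s}{r}$, and makes transparent that the finiteness of the generating set is inherited from Newton's identities in $n$ variables; the price is that it is specific to $S_n$ (and to characteristic zero), whereas Weyl's machinery is set up once for all the classical first fundamental theorems.
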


As a combination of Proposition \ref{Sym acting on several sets} and the proof of Lemma \ref{finite generation for class of groups}
we obtain the following.

\begin{corollary}\label{Wn as module}
The $K[X_n]^{S_n}$-module $W_n^{S_n}$ is generated by the polynomials from (\ref{polarizations})
\[
e_{1,q}(U_n,X_n)=\sum_{i=1}^n\sum_{j_1<\cdots<j_q}u_ix_{j_1}\cdots x_{j_q},\quad i\not=j_1,\ldots,j_q,q=0,1,\ldots,n-1.
\]
\end{corollary}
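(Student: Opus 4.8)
The plan is to combine the general principle from the proof of Lemma~\ref{finite generation for class of groups} with the explicit generators of $K[U_n,X_n]^{S_n}$ provided by Proposition~\ref{Sym acting on several sets}. Recall that in that proof one observes that the homogeneous components with respect to $U_n$ and with respect to $X_n$ of $K[U_n,X_n]$ are stable under the $GL_n(K)$-action, hence under the $S_n$-action, so the generators of the invariant algebra may be taken bihomogeneous; then those generators that are linear in $U_n$ generate $W_n^{S_n}$ as a module over the subalgebra generated by those generators that are independent of $U_n$, namely $K[X_n]^{S_n}$. So the only task is to extract from the generating set $\{e_{p,q}(U_n,X_n) : 0<p+q\leq n\}$ exactly those that are linear in $U_n$, i.e. those with $p=1$.

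First I would write out the polarization $e_{1,q}(U_n,X_n)$ from formula~(\ref{polarizations}) for $p=1$: it is the sum over all tuples $(i,j_1,\ldots,j_q)$ of pairwise distinct indices with $j_1<\cdots<j_q$ of the monomial $u_i x_{j_1}\cdots x_{j_q}$, which is precisely the displayed expression $\sum_{i=1}^n\sum_{j_1<\cdots<j_q}u_ix_{j_1}\cdots x_{j_q}$ with the constraint $i\neq j_1,\ldots,j_q$. The constraint $p+q\leq n$ with $p=1$ forces $q\leq n-1$, which accounts for the range $q=0,1,\ldots,n-1$ in the statement. I would also note that $e_{1,0}=u_1+\cdots+u_n$ is the pure $U_n$-linear term with no $X_n$, consistent with $q=0$.

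Then I would verify the module-generation claim directly using the argument of Lemma~\ref{finite generation for class of groups}. An arbitrary element of $W_n^{S_n}$ is an $S_n$-invariant element of $K[U_n,X_n]$ that is homogeneous of degree $1$ in $U_n$ and hence, being invariant, it lies in the vector space spanned by the monomials of the form (generator linear in $U_n$) times (product of generators independent of $U_n$); collecting the $U_n$-linear part shows it is a $K[X_n]^{S_n}$-linear combination of the $e_{1,q}$. Conversely each $e_{1,q}$ lies in $W_n^{S_n}$: it is manifestly $S_n$-invariant and linear in $U_n$, hence lies in $W_n$ after the identification~(\ref{image in polynomial algebra in 2n variables}).

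The main obstacle, such as it is, is purely bookkeeping: one must be careful that the identification of $W_n$ inside $K[U_n,X_n]/I$ via~(\ref{image in polynomial algebra in 2n variables}) correctly matches ``$U_n$-linear part of $K[U_n,X_n]^{S_n}$'' with ``$W_n^{S_n}$'', and that the $K[X_n]^{S_n}$-module structure on $W_n$ coming from the right $K[X_n]$-module structure on $W_n$ agrees with multiplication in $K[U_n,X_n]/I$ by $X_n$-polynomials. Both are immediate from the definitions in Section~\ref{first section}, so no real difficulty arises; the corollary is essentially a direct specialization, and I would present it in two or three lines citing Proposition~\ref{Sym acting on several sets} and the proof of Lemma~\ref{finite generation for class of groups}.
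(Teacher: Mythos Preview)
Your proposal is correct and follows exactly the paper's approach: the corollary is stated as a direct combination of Proposition~\ref{Sym acting on several sets} with the proof of Lemma~\ref{finite generation for class of groups}, and your extraction of the $p=1$ generators (with the resulting bound $q\leq n-1$) is precisely the intended argument. The paper gives no further detail, so your two- or three-line presentation is entirely appropriate.
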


\section{Main results}

In this section we shall find a minimal system of generators of the $K[X_n]^{S_n}$-module $(F_n')^{S_n}$.
As a consequence we shall find a system of generators of the Lie algebra $F_n^{S_n}$.
Applying (\ref{belonging to F'}) to Corollary \ref{Wn as module}
the generators of the $K[X_n]^{S_n}$-module $(F_n')^{S_n}$
are of the form
\begin{equation}\label{symmetric in F'}
h_p(U_n,X_n)=\sum_{q=0}^{n-1}e_{1,q}(U_n,X_n)p_q(X_n),\quad p_q(X_n)\in K[X_n]^{S_n},
\end{equation}
with the property $h_p(X_n,X_n)=0$. In the proof of our main result we shall need the following easy lemma.

\begin{lemma}\label{group of solutions of linear equation}
Consider the vector space $Z\subset K^n$ consisting of all solutions $t=(t_1,\ldots,t_n)$
of the equation
\begin{equation}\label{linear equation}
\sum_{j=1}^njt_j=0.
\end{equation}
If $c=(c_1,\ldots,c_n)\in Z$ has nonzero coordinates at positions $j_1<\cdots<j_m$ only, then
$c$ is a linear combination of
\[
z_{j_1j_k}=(0,\ldots,0,j_k,0,\ldots,0,-j_1,0,\ldots,0),\quad k=2,\ldots,m,
\]
where the nonzero coordinates of $z_{j_1j_k}$ are in the $j_1$-th and $j_k$-th positions.
\end{lemma}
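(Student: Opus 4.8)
The plan is to describe the solution space $Z$ of the single linear equation $\sum_{j=1}^n j t_j = 0$ explicitly and then restrict to the coordinate subspace determined by the support of $c$. First I would note that $Z$ is a hyperplane in $K^n$, hence $\dim_K Z = n-1$, and that a convenient spanning set is $\{z_{1k} : k = 2,\ldots,n\}$ where $z_{1k}$ has a $k$ in position $1$, a $-1$ in position $k$, and zeros elsewhere; these $n-1$ vectors are visibly linearly independent (look at positions $2,\ldots,n$) and each satisfies the equation, so they form a basis of $Z$. The same argument applied to the coordinate subspace $K^n_{j_1,\ldots,j_m}$ spanned by the standard basis vectors $e_{j_1},\ldots,e_{j_m}$ shows that $Z \cap K^n_{j_1,\ldots,j_m}$ has dimension $m-1$ and is spanned by the vectors $z_{j_1 j_k}$ for $k=2,\ldots,m$, each of which lies in this subspace, satisfies the equation, and is supported inside $\{j_1,\ldots,j_m\}$.

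With this in place, the conclusion is almost immediate: by hypothesis $c \in Z$ has support contained in $\{j_1,\ldots,j_m\}$, so $c \in Z \cap K^n_{j_1,\ldots,j_m}$, and we have just exhibited $\{z_{j_1 j_k} : k=2,\ldots,m\}$ as a basis of this space, so $c$ is a linear combination of them. Alternatively, and perhaps more transparently for the reader, I would give the explicit coefficients: if $c = (c_1,\ldots,c_n)$ with $c_{j_k} = \gamma_k$ and $\gamma_1 = c_{j_1}$, then one checks directly that
\[
c = \sum_{k=2}^{m} \frac{\gamma_k}{j_k}\, z_{j_1 j_k},
\]
by comparing coordinates at each $j_k$ and using the defining relation $\sum_{k=1}^m j_k \gamma_k = 0$ to handle the coordinate at $j_1$.

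There is no real obstacle here; the only thing to be careful about is the degenerate case $m \leq 1$. If $m = 0$ then $c = 0$ and the (empty) linear combination works; if $m = 1$ then the equation $j_1 c_{j_1} = 0$ forces $c_{j_1} = 0$, so again $c = 0$ and there is nothing to prove. I would mention these trivial cases in one sentence and otherwise present the explicit coefficient formula above as the proof.
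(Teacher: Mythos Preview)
Your dimension-counting argument is correct and constitutes a complete proof: restricting the single linear equation to the coordinate subspace $K^n_{j_1,\ldots,j_m}$ gives an $(m-1)$-dimensional space, and the $m-1$ vectors $z_{j_1 j_k}$ are visibly linearly independent (look at positions $j_2,\ldots,j_m$), hence a basis. This is slightly more conceptual than the paper's approach, which simply writes down the explicit linear combination directly and checks it vanishes, without invoking dimension.

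However, the explicit formula you offer as an ``alternative'' is wrong. With $z_{j_1 j_k}$ as in the statement (entry $j_k$ in position $j_1$, entry $-j_1$ in position $j_k$), the $j_l$-th coordinate of $\sum_{k=2}^m (\gamma_k/j_k)\,z_{j_1 j_k}$ for $l\ge 2$ is $(\gamma_l/j_l)(-j_1)=-j_1\gamma_l/j_l$, not $\gamma_l$. The correct coefficients are obtained by matching these coordinates: one needs $\alpha_l(-j_1)=\gamma_l$, so $\alpha_l=-\gamma_l/j_1$, giving
\[
c=-\frac{1}{j_1}\sum_{k=2}^m \gamma_k\,z_{j_1 j_k},
\]
and then the $j_1$-coordinate works out via $\sum_{k=1}^m j_k\gamma_k=0$. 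This is exactly the combination the paper writes down (in the equivalent form $j_1 c+\sum_{k=2}^m c_{j_k} z_{j_1 j_k}=0$). If you want to include the explicit-coefficient version, fix the denominator to $j_1$ and the sign; otherwise your first paragraph already suffices.
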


\begin{proof}
Obviously all $z_{j_1j_k}$ satisfy the equation (\ref{linear equation}).
Let $c=(c_1,\ldots,c_n)$ be an arbitrary solution of (\ref{linear equation})
satisfying the restrictions on its nozero coordinates.
Then
\[
\frac{1}{j_1}\left(j_1c+\sum_{k=2}^mc_jz_{j_1j_k}\right)=\frac{1}{j_1}\left(j_1c_{j_1}+\sum_{k=2}^mj_kc_{j_k},0,\ldots,0\right)=(0,0,\ldots,0)
\]
and $c$ can be expressed in terms of $z_{j_1j_k}$, $k=2,\ldots,m$.
\end{proof}

In the sequel we shall denote
\[
\varepsilon_{q+1}(U_n,X_n)=e_{1,q}(U_n,X_n)=\sum_{i=1}^nu_i\sum x_{j_1}\cdots x_{j_q},\quad 1\leq j_1<\cdots<j_q\leq n,i\not=j_l,
\]
\[
e_q(X_n)=e_{0,q}(X_n)=\sum x_{j_1}\cdots x_{j_q},\quad 1\leq j_1<\cdots<j_q\leq n.
\]
In this notation Corollary \ref{Wn as module} gives that the $K[X_n]^{S_n}$-module $W_n^{S_n}$ is generated by the elements
\[
\varepsilon_1(U_n,X_n)=\varepsilon_1(U_n),\varepsilon_2(U_n,X_n),\ldots,\varepsilon_n(U_n,X_n),
\]
and the equation (\ref{symmetric in F'}) becomes
\begin{equation}\label{symmetric in F'-2}
h_r(U_n,X_n)=\sum_{j=1}^n\varepsilon_j(U_n,X_n)r_j(X_n),\quad r_j(X_n)\in K[X_n]^{S_n},
\end{equation}

The following theorem gives a generating set of $(F_n')^{S_n}$
as a $K[X_n]^{S_n}$-module.

\begin{theorem}\label{the main result}
Identifying the elements of $F_n$ with their images in the abelian wreath product $(KU_n)\text{\rm wr}(KV_n)$,
as a $K[X_n]^{S_n}$-module $(F_n')^{S_n}$ is generated by the polynomials
\begin{equation}\label{u(ij)}
h_{ij}(U_n,X_n)=j\varepsilon_i(U_n,X_n)e_j(X_n)-i\varepsilon_j(U_n,X_n)e_i(X_n),\quad 1\leq i<j\leq n.
\end{equation}
\end{theorem}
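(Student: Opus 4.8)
The plan is to transport the statement into commutative algebra, where it becomes a claim about the first syzygies of a regular sequence, and then to recognise the trivial (Koszul) syzygies as exactly the polynomials $h_{ij}$. First I would introduce the $K[X_n]$-linear map $\mu\colon W_n\to K[X_n]$ with $\mu(u_i)=x_i$; by (\ref{belonging to F'}) the image $\delta(F_n')$ is precisely $\ker\mu$, and since $\mu$ is $GL_n(K)$-equivariant (it is the restriction to $W_n$ of the substitution $u_i\mapsto x_i$ on $K[U_n,X_n]/I$) it restricts to $\mu\colon W_n^{S_n}\to K[X_n]^{S_n}$ with $(F_n')^{S_n}=\ker(\mu|_{W_n^{S_n}})$. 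A direct count gives $\mu(\varepsilon_j(U_n,X_n))=j\,e_j(X_n)$, since each squarefree monomial in $X_n$ of degree $j$ arises $j$ times. Writing $\phi\colon (K[X_n]^{S_n})^n\to W_n^{S_n}$, $(r_1,\dots,r_n)\mapsto\sum_j\varepsilon_j r_j$ (surjective by Corollary \ref{Wn as module}) and $\psi=\mu\circ\phi\colon (r_j)\mapsto\sum_j j\,e_j r_j$, surjectivity of $\phi$ yields $(F_n')^{S_n}=\phi(\ker\psi)$. Since $\phi$ sends the vector $s_{ij}$ whose $i$-th entry is $j\,e_j$, whose $j$-th entry is $-i\,e_i$, and all of whose other entries vanish, to $h_{ij}(U_n,X_n)$ (and $\psi(s_{ij})=0$), the theorem reduces to the claim that the syzygy module $\ker\psi$ of the sequence $(e_1,2e_2,\dots,ne_n)$ in $K[X_n]^{S_n}$ is generated as a module by the $s_{ij}$, $1\le i<j\le n$.

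To prove that claim I would invoke the Fundamental Theorem on Symmetric Polynomials: $e_1,\dots,e_n$ are algebraically independent, so $K[X_n]^{S_n}=K[e_1,\dots,e_n]$ is a polynomial ring and $e_1,\dots,e_n$ — hence also $e_1,2e_2,\dots,ne_n$ since $\mathrm{char}\,K=0$ — form a regular sequence, whose first syzygies are the Koszul ones. For a self-contained argument one can induct on $n$: given $\sum_{j=1}^n j\,e_j r_j=0$, set $e_n=0$, apply the inductive hypothesis to the resulting syzygy of $(e_1,\dots,(n-1)e_{n-1})$, subtract the corresponding combination of the $s_{ij}$ with $i<j\le n-1$ to reduce to the case where $r_1,\dots,r_{n-1}$ are divisible by $e_n$, cancel $e_n$, and read off the remaining combination of the $s_{jn}$. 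Alternatively — and this is presumably where Lemma \ref{group of solutions of linear equation} is meant to enter — one grades by total degree in the $e_i$, expands $r_j=\sum_\mu r_{j,\mu}e^\mu$, and notes that the vanishing of the coefficient of each monomial $e^\nu$ in $\sum_j j\,e_j r_j$ is exactly an equation $\sum_{j\in\mathrm{supp}(\nu)} j\,r_{j,\nu-\mathbf e_j}=0$; Lemma \ref{group of solutions of linear equation} then expresses the relevant coefficient vector through the vectors $z_{j_1 j_k}$, which correspond precisely to scalar multiples of the $s_{ij}$, so subtracting suitable multiples of the $s_{ij}$ strictly lowers a monomial measure and an induction closes the argument.

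Finally, combining the two parts: any $w\in(F_n')^{S_n}$ equals $\phi(r)$ for some $r\in\ker\psi$, and writing $r=\sum_{i<j}c_{ij}s_{ij}$ with $c_{ij}\in K[X_n]^{S_n}$ gives $w=\sum_{i<j}c_{ij}h_{ij}(U_n,X_n)$, which is the assertion of Theorem \ref{the main result}. I expect the main obstacle to be the syzygy step — not its truth, which is classical, but packaging the inductive bookkeeping (especially in the monomial version matched to Lemma \ref{group of solutions of linear equation}) so that every reduction provably terminates; by contrast the passage from $(F_n')^{S_n}$ to the syzygy module is purely formal once $\mu$ and the identity $\mu(\varepsilon_j)=j\,e_j$ are established.
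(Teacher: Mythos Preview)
Your proposal is correct. The paper's own proof is precisely the ``monomial'' alternative you sketch in your second paragraph: it expands each $r_j$ in the basis of monomials $e_1^{a_1}\cdots e_n^{a_n}$ (with the exponent of $e_j$ shifted down by one), uses $\varepsilon_j(X_n,X_n)=j\,e_j(X_n)$ and algebraic independence of the $e_k$ to obtain the scalar relation $\sum_j j\,\alpha_{aj}=0$ for each multi-exponent $a$, and then invokes Lemma~\ref{group of solutions of linear equation} to write the coefficient vector as a combination of the $z_{j_1j_k}$, which translates directly into an expression of the $a$-homogeneous piece $h^{(a)}$ through the $h_{ij}$. Your primary route is genuinely different: by introducing the $K[X_n]$-linear map $\mu$ and the surjection $\phi$ you recast the problem as computing the first syzygies of the regular sequence $e_1,2e_2,\dots,ne_n$ in the polynomial ring $K[e_1,\dots,e_n]$, and then appeal to the classical Koszul fact that these syzygies are generated by the trivial ones $s_{ij}$. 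This buys you a cleaner conceptual picture, an immediate explanation of \emph{why} the generators look like $h_{ij}=j\varepsilon_i e_j-i\varepsilon_j e_i$, and a route to the relations among the $h_{ij}$ (the next Koszul differential gives exactly the relation in the paper's Remark). The paper's approach, by contrast, is entirely elementary and self-contained---it needs no homological input, only the linear-algebra Lemma~\ref{group of solutions of linear equation}---and is better suited to readers unfamiliar with regular sequences.
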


\begin{proof}
Let $h_r(U_n,X_n)\in W_n^{S_n}$ be a polynomial in the form (\ref{symmetric in F'-2}) satisfying the property $h_r(X_n,X_n)=0$.
We write the symmetric polynomials $r_j(X_n)$ as
\[
r_j(X_n)=\sum_{aj}\alpha_{aj}e_1^{a_1}(X_n)\cdots e_j^{a_j-1}(X_n)\cdots e_n^{a_n}(X_n),\quad a_i\geq 0,a_j\geq 1,\alpha_a\in K.
\]
Since $\varepsilon_j(X_n,X_n)=je_j(X_n)$ the condition $h_r(X_n,X_n)=0$ implies
\[
\sum_{j=1}^n\sum_{aj}j\alpha_{aj}e_1^{a_1}(X_n)\cdots e_j^{a_j}(X_n)\cdots e_n^{a_n}(X_n)=0.
\]
Since $e_1(X_n),\ldots,e_n(X_n)$ are algebraically independent we obtain that
\[
\left(\sum_{j=1}^nj\alpha_{aj}\right)e_1^{a_1}(X_n)\cdots e_j^{a_j}(X_n)\cdots e_n^{a_n}(X_n)=0
\]
and
\begin{equation}\label{fixed powers of e_i}
\sum_{j=1}^nj\alpha_{aj}=0
\end{equation}
for every fixed $a=(a_1,\ldots,a_n)$.
Since the degree of $e_j(X_n)$ in $r_j(X_n)$ is equal to $a_j-1$, the coefficient $\alpha_{aj}$
in (\ref{fixed powers of e_i}) is equal to 0 if $a_j=0$. Now we fix $a=(a_1,\ldots,a_n)\not=(0,\ldots,0)$
and consider the summand
\[
h^{(a)}(U_n,X_n)=\sum_{j=1}^n\alpha_{aj}\varepsilon_j(U_n,X_n)e_1^{a_1}(X_n)\cdots e_j^{a_j-1}(X_n)\cdots e_n^{a_n}(X_n)
\]
of $h_r(U_n,X_n)$ in (\ref{symmetric in F'-2}). The condition (\ref{fixed powers of e_i}) guarantees that at least two
$a_i$ and $a_j$ are different from 0. Let the nonzero coefficients $\alpha_{aj}$ be with indices $j_1<\cdots<j_m$.
Since the corresponding summand has $\varepsilon_j(U_n,X_n)$ as a factor, we derive that $a_{j_k}>0$ for all $k=1,\ldots,m$.
Since $a=(a_1,\ldots,a_n)$ satisfies the equation (\ref{fixed powers of e_i}), by Lemma \ref{group of solutions of linear equation}
we can express it in the form
\[
a=\sum_{k=2}^m\beta_kz_{j_1j_k},\quad \beta_k\in K.
\]
Then we rewrite $h^{(a)}(U_n,X_n)$ in the form
\[
h^{(a)}(U_n,X_n)=\sum_{k=2}^m\beta_kh_{j_1j_k}(U_n,X_n)e_1^{a_1}(X_n)\cdots e_{j_1}^{a_{j_1}-1}(X_n)\cdots e_{j_k}^{a_{j_k}-1}(X_n)\cdots e_n^{a_n}(X_n).
\]
Hence $h^{(a)}(U_n,X_n)$ belongs to the $K[X_n]^{S_n}$-module generated by $h_{ij}(U_n,X_n)$, $1\leq i<j\leq n$.
\end{proof}

\begin{remark}
It is interesting to find a presentation (in terms of generators and defining relations) of
the $K[X_n]^{S_n}$-module $(F_n')^{S_n}$.
It is easy to check that for $1\leq i<j<k\leq n$ the generators (\ref{u(ij)}) from Thereom \ref{the main result} satisfy the relation
\[
kh_{ij}(U_n,X_n)e_k(X_n)-jh_{ik}(U_n,X_n)e_j(X_n)+ih_{jk}(U_n,X_n)e_i(X_n)=0,
\]
but it is not clear whether they are enough to determine $(F_n')^{S_n}$ as a factor-module
of the free $K[X_n]^{S_n}$-module freely generated by the elements (\ref{u(ij)}).
\end{remark}

\begin{corollary}
The Lie algebra $F_n^{S_n}$ is generated by
\[
f_1(X_n)=\varepsilon_1(U_n,X_n)=x_1+\cdots+x_n
\]
(expressed as an element of $F_n$) and
\[
h_{ij}(U_n,X_n)e_2^{a_2}(X_n)\cdots e_n^{a_n}(X_n),\quad a_2,\ldots,a_n\geq 0,
\]
(expressed as an element in the image of $F_n$ in $(KU_n)\text{\rm wr}(KV_n)$).
\end{corollary}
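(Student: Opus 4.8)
The plan is to leverage Theorem \ref{the main result} together with the standard fact that the Lie algebra $F_n^{S_n}$ decomposes as $Kf_1(X_n) \oplus \cdots$ — more precisely, that $F_n^{S_n}$ is spanned by its intersection with the abelianization part and by $(F_n')^{S_n}$. Since $F_n = KX_n \oplus F_n'$ as vector spaces and the action of $S_n$ respects this decomposition, we have $F_n^{S_n} = (KX_n)^{S_n} \oplus (F_n')^{S_n}$, and $(KX_n)^{S_n}$ is one-dimensional, spanned by $f_1(X_n) = x_1 + \cdots + x_n$. So it suffices to show that the Lie subalgebra generated by $f_1(X_n)$ and the displayed elements $h_{ij}(U_n,X_n)e_2^{a_2}(X_n)\cdots e_n^{a_n}(X_n)$ contains all of $(F_n')^{S_n}$.

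First I would observe that, by Theorem \ref{the main result}, $(F_n')^{S_n}$ is spanned as a $K$-vector space by the elements $h_{ij}(U_n,X_n)\, m(X_n)$ where $m(X_n)$ ranges over all monomials $e_1^{a_1}(X_n)e_2^{a_2}(X_n)\cdots e_n^{a_n}(X_n)$ in the elementary symmetric polynomials. The key reduction is therefore to eliminate the factor $e_1^{a_1}(X_n) = (x_1+\cdots+x_n)^{a_1}$, i.e.\ to show that multiplying a generator by $e_1(X_n)$ stays inside the Lie algebra generated by our proposed set. This is exactly where the bracket with $f_1(X_n)$ comes in: recalling that the $K[X_n]$-module action on $F_n'$ is realized by $\operatorname{ad}$-operators, we have, for any $f \in F_n'$,
\[
[f, f_1(X_n)] = [f, x_1 + \cdots + x_n] = f(\operatorname{ad}x_1 + \cdots + \operatorname{ad}x_n) = f \cdot e_1(X_n),
\]
so bracketing with $f_1(X_n)$ is precisely multiplication by $e_1(X_n)$. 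Hence, starting from $h_{ij}(U_n,X_n)e_2^{a_2}(X_n)\cdots e_n^{a_n}(X_n)$ and taking $a_1$-fold brackets with $f_1(X_n)$, we obtain $h_{ij}(U_n,X_n)e_1^{a_1}(X_n)e_2^{a_2}(X_n)\cdots e_n^{a_n}(X_n)$, which are exactly the $K$-spanning elements of $(F_n')^{S_n}$.

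It remains to check two things. One is that the proposed generators are genuinely invariant and lie in $F_n$ (resp.\ $F_n'$): each $h_{ij}(U_n,X_n)$ lies in $W_n^{S_n}$ by Corollary \ref{Wn as module} and satisfies the defining condition \eqref{belonging to F'} because $h_{ij}(X_n,X_n) = j\cdot ie_i(X_n)e_j(X_n) - i\cdot je_j(X_n)e_i(X_n) = 0$, so it is the image under $\delta$ of an element of $(F_n')^{S_n}$; multiplying by the symmetric polynomial $e_2^{a_2}\cdots e_n^{a_n}$ keeps it in $(F_n')^{S_n}$. The other is the converse inclusion — that the Lie algebra generated by these elements is contained in $F_n^{S_n}$ — which is immediate since $F_n^{S_n}$ is a Lie subalgebra containing all the listed elements. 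I do not expect a serious obstacle here; the only point requiring a little care is the verification that the $\operatorname{ad}$-action identity $[f, f_1(X_n)] = f\cdot e_1(X_n)$ is applied consistently under the identification of $F_n'$ with its image in $(KU_n)\operatorname{wr}(KV_n)$, where multiplication by $e_1(X_n)$ sends $\sum u_i f_i(X_n)$ to $\sum u_i f_i(X_n)(x_1+\cdots+x_n)$ — this matches the bracket with $v_1 + \cdots + v_n$ in the wreath product, and then one notes $f_1(X_n) = x_1+\cdots+x_n$ maps to $\sum(u_i+v_i)$, whose $W_n$-component acts trivially since $[W_n,W_n]=0$.
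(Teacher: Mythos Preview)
Your proof is correct and follows essentially the same approach as the paper: decompose $F_n^{S_n}=(KX_n)^{S_n}\oplus (F_n')^{S_n}$, invoke Theorem~\ref{the main result} to obtain the $K$-spanning set $h_{ij}\prod_k e_k^{a_k}$ of $(F_n')^{S_n}$, and then observe that bracketing with $f_1(X_n)$ realizes multiplication by $e_1(X_n)$, so the $e_1$-powers can be manufactured from the listed generators. Your write-up is in fact a bit more explicit than the paper's in checking that the $h_{ij}$ actually lie in $\delta(F_n')$ and in verifying the bracket identity inside the wreath product.
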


\begin{proof}
Up to a multiplicative constant the polynomial $f_1(X_n)=x_1+\cdots+x_n\in F_n^{S_n}$ is the only nonzero element
in $F_n^{S_n}/(F_n')^{S_n}$. Hence for the proof of the corollary it is sufficient to consider the elements in $(F_n')^{S_n}$ only.
By Theorem \ref{the main result} as a vector space $(F_n')^{S_n}$ is spanned by the polynomials $f_{ij}^{(a)}(X_n)\in (F_n')^{S_n}$
which correspond to the elements
\[
h_{ij}^{(a)}=h_{ij}(U_n,X_n)\prod_{k=1}^ne_k^{a_k}(X_n)\in W_n\subset (KU_n)\text{\rm wr}(KV_n).
\]
The commuting of $f_{ij}^{(a)}(X_n)$ by $f_1(X_n)=x_1+\cdots+x_n$ multiplies by $e_1(X_n)$ its image $h_{ij}^{(a)}$ in $W_n$.
Hence, starting with $f_{ij}^{(b)}(X_n)$ for $b=(0,a_2,\ldots,a_n)$ and commuting several times with
$f_1(X_n)$ we can obtain $f_{ij}^{(a)}(X_n)$ for $a=(a_1,a_2,\ldots,a_n)$. Since $[F_n',F_n']=0$,
we need all $f_{ij}^{(b)}(X_n)$, $b=(0,a_2,\ldots,a_n)$, to produce the basis elements $f_{ij}^{(a)}(X_n)$ of $(F_n')^{S_n}$.
\end{proof}

\begin{example}
The generators of the $K[X_n]^{S_n}$-module $(F_n')^{S_n}$ in Theorem \ref{the main result} are given as elements
of the ideal $W_n$ of the abelian wreath product $(KU_n)\text{wr}(KV_n)$.
Now we shall give their explicit form as elements of $(F_n')^{S_n}$ for $n=2$ and 3.

Let $n=2$. According to Theorem \ref{the main result} the $K[X_2]^{S_2}$-module $(F_2')^{S_2}$ is generated by
\[
h_{12}(U_2,X_2)=2\varepsilon_1(U_2,X_2)e_2(X_2)-\varepsilon_2(U_2,X_2)e_1(X_2)
\]
\[
=2(u_1+u_2)(x_1x_2)-(u_1x_2+u_2x_1)(x_1+x_2)=(u_1x_2-u_2x_1)(x_1-x_2)
\]
which corresponds to the generator
\[
f_{12}(X_2)=[x_2,x_1,x_2-x_1]\in (F_2')^{S_2}
\]
from Corollary \ref{finite generation for n=2}.

Let $n=3$. Then $K[X_3]^{S_3}$-module $(F_3')^{S_3}$ is generated by the elements
$f_{12}(X_3)$, $f_{13}(X_3)$, $f_{23}(X_3)$ corresponding, respectively, to
$h_{12}(U_3,X_3)$, $h_{13}(U_3,X_3)$, $h_{23}(U_3,X_3)$.
Direct computations show that
\[
f_{12}(X_3)=[x_2,x_1,x_2-x_1]+[x_3,x_1,x_3-x_1]+[x_3,x_2,x_3-x_2],
\]
\[
f_{13}(X_3)=[x_2,x_1,x_2-x_1,x_3]+[x_3,x_1,x_3-x_1,x_2]+[x_3,x_2,x_3-x_2,x_1],
\]
\[
f_{23}(X_3)=[x_2,x_1,x_2-x_1,x_1+x_2,x_3]
\]
\[
+[x_3,x_1,x_3-x_1,x_1+x_3,x_2]+[x_3,x_2,x_3-x_2,x_2+x_3,x_1].
\]
With some additional efforts we can present all commutators as linear combinations of commutators in the form
$[x_{i_1},x_{i_2},x_{i_3}]$, $i_1>i_2\leq i_3$, and $[x_{i_1},x_{i_2},x_{i_3},x_{i_4}]$, $i_1>i_2\leq i_3\leq i_4$.
\end{example}


\begin{thebibliography}{ABCD}

\bibitem[Ba]{Ba}
Yu.A. Bahturin,
Identical Relations in Lie Algebras (Russian),
``Nauka'', Moscow, 1985.
Translation: VNU Science Press, Utrecht, 1987.

\bibitem[BRRZ]{BRRZ}
N. Bergeron, C. Reutenauer, M. Rosas, M. Zabrocki,
Invariants and coinvariants of the symmetric groups in noncommuting variables,
Canad. J. Math. {\bf 60} (2008), No. 2, 266-296.

\bibitem[Br]{Br}
R.M. Bryant,
On the fixed points of a finite group acting on a free Lie algebra,
J. London Math. Soc. (2) {\bf 43} (1991), No. 2, 215-224.

\bibitem[DiFo]{DiFo}
W. Dicks, E. Formanek,
Poincar\'e series and a problem of S. Montgomery,
Lin. Multilin. Algebra {\bf 12} (1982), 21-30.

\bibitem[DoDr]{DoDr}
M. Domokos, V. Drensky,
Rationality of Hilbert series in noncommutative invariant theory,
International J. Algebra and Computations {\bf 27} (2017), No. 7, 831-848.

\bibitem[Dr]{Dr}
V. Drensky,
Fixed algebras of residually nilpotent Lie algebras,
Proc. Amer. Math. Soc. {\bf 120} (1994), No. 4, 1021-1028.

\bibitem[DrF]{DrF}
V. Drensky, {\c S}. F{\i}nd{\i}k,
Classical invariant theory for free metabelian Lie algebras,
J. Lie Theory {\bf 29} (2019), 1071-1092.

\bibitem[FO]{FO}
{\c S}. F{\i}nd{\i}k, N.{\c S}. \"O{\v g}\"u{\c s}l\"u,
Palindromes in the free metabelian Lie algebras,
Internat. J. Algebra Comput. {\bf 29} (2019), No. 5, 885-891.

\bibitem[GKL]{GKL}
I.M. Gelfand, D. Krob, A. Lascoux, B. Leclerc, V.S. Retakh, J.-Y. Thibon,
Noncommutative symmetric functions,
Adv. Math. {\bf 112} (1995), No. 2, 218-348.

\bibitem[H]{H}
D. Hilbert,
Mathematische Probleme,
G\"ottinger Nachrichten (1900), 253-297;
Archiv der Mathematik und Physik (3) {\bf 1} (1901), 44-63, 213-237.
Translation: Bull. Amer. Math. Soc. {\bf 8} (1902), No. 10, 437-479.

\bibitem[Kh]{Kh}
V.K. Kharchenko,
Algebra of invariants of free algebras (Russian),
Algebra i Logika {\bf 17} (1978), 478-487.
Translation: Algebra and Logic {\bf 17} (1978), 316-321.

\bibitem[Ko]{Ko}
A.N. Koryukin,
Noncommutative invariants of bialgebras (Russian),
Algebra i Logika {\bf 33} (1994), No. 6, 654-680.
Translation: Algebra Logic {\bf 33} (1994), No. 6, 366-380.

\bibitem[N]{N}
E. Noether,
Der Endlichkeitssatz der Invarianten endlicher Gruppen,
Math. Ann. {\bf 77} (1916), 89-92.
Reprinted in ``Gesammelte Abhandlungen. Collected Papers'',
Springer-Verlag, Berlin-Heidelberg-New York-Tokyo, 181-184, 1983.

\bibitem[Sh]{Sh}
A.L. Shmel'kin,
Wreath products of Lie algebras and their application in the theory of groups (Russian),
Trudy Moskov. Mat. Obshch. {\bf 29} (1973), 247-260.
Translation: Trans. Moscow Math. Soc. {\bf 29} (1973), 239-252.

\bibitem[We]{We}
H. Weyl,
The Classical Groups, Their Invariants and Representations,
Princeton Univ. Press, Princeton, N.J., 1946,
New Edition, 1997.

\bibitem[Wo]{Wo}
M.C. Wolf,
Symmetric functions of non-commutative elements,
Duke Math. J. {\bf 2} (1936), No. 4, 626-637.

\bibitem[Z]{Z}
E.I. Zelmanov,
On Engel Lie algebras (Russian),
Sibirsk. Mat. Zh. {\bf 29} (1988), No.5, 112-117
Translation: Siberian Math. J. {\bf 29} (1988), 777-781.

\end{thebibliography}
\end{document}